\documentclass[12pt]{amsart}

\usepackage{graphicx}

\usepackage{subcaption} %para facer a artistada coas figuras nun grid
\usepackage{quiver}
\usepackage{adjustbox}
\usepackage{tikz-cd}
\usepackage{hyperref}
\hypersetup{bookmarks=true,
	unicode=true,
	colorlinks=true,
	citecolor=black,
	linkcolor=black,
	urlcolor=black,
	plainpages=false,
	pdfpagelabels=true}

% Parquetes que añadí yo
\usepackage[new]{old-arrows}
\usepackage{comment}
\usepackage{xfrac}
\usepackage{faktor}
\usepackage{graphicx}
\usepackage{resizegather}
\usepackage{float}

%%=========

%%     Remove any commented or uncommented macros you do not use.
%
%\documentclass{proc-l}
%
%%     If you need symbols beyond the basic set, uncomment this command.
%\usepackage{amssymb}
%
%%     If your article includes graphics, uncomment this command.
%\usepackage{graphicx}
%
%%     If the article includes commutative diagrams, ...
%\usepackage[cmtip,all]{xy}

%============== authors' packages

 \usepackage{url}	%url adresses
 \allowdisplaybreaks %align may cross pages

\usepackage{pgf}

\usepackage{xcolor}

\usepackage{comment} % para comentar trozos grandes de codigo

%=========== 
%PARA POÑER O TEMA ESCURO (HAI QUE COMENTALO LOGO PARA BAIXAR O DOCUMENTO)
%==============

%===================================
\newtheorem{teo}{Theorem}[section]
\newtheorem{theorem}[teo]{Theorem}

\newtheorem{corollary}[teo]{Corollary}

\newtheorem{lemma}[teo]{Lemma}

\newtheorem{proposition}[teo]{Proposition}

\theoremstyle{definition}
\newtheorem{definition}[teo]{Definition}

\newtheorem{example}[teo]{Example}

\theoremstyle{remark}
\newtheorem{remark}{Remark}

\numberwithin{figure}{section}%figure numbering

%===== authors commands

\newcommand{\cat}{\mathop{\mathrm{cat}}}

\newcommand{\co}{\colon}

\newcommand{\D}{\mathrm{D}}

\newcommand{\F}{\mathbb{F}}

\newcommand{\id}{\mathrm{id}}

\newcommand{\wtilde}{\widetilde}

\newcommand{\HD}{\mathrm{H^{\sbt}\mathrm{D}}}

\newcommand{\sbt}{\,\begin{picture}(-1,1)(-1,-3)\circle*{3}\end{picture}\ }

\newcommand{\HhomD}{\mathrm{H_{\sbt}\mathrm{D}}}

% Comandos

\newcommand{\CP}{\mathbb{C}\mathrm{P}}
\newcommand{\cte}{\mathrm{c}}

\newcommand{\Hcat}{\mathrm{H^{\sbt}cat}}

\newcommand{\Hscat}{\mathrm{H^{\sbt}scat}}
\newcommand{\HsD}{\mathrm{H^{\sbt}s\mathrm{D}}}
\newcommand{\HsTC}{\mathrm{H^{\sbt}sTC}}
\newcommand{\HTC}{\mathrm{H^{\sbt}TC}}

\newcommand{\Hw}{\operatorname{H^{\sbt}w}}
\newcommand{\hw}{\operatorname{hw}}

\newcommand{\im}{\mathrm{im\,}}
\newcommand{\J}{\mathcal{J}}
\newcommand{\lcp}{\mathrm{l.c.p.}\,}

\newcommand{\RP}{\mathbb{R}\mathrm{P}}
\renewcommand{\S}{\mathbb{S}}
\newcommand{\scat}{\mathrm{scat}}

\newcommand{\sD}{\mathrm{s}\mathrm{D}}
\newcommand{\sd}{\mathrm{sd}}

\newcommand{\TC}{\mathrm{TC}}

\newcommand{\Z}{\mathbb{Z}}

\providecommand{\abs}[1]{\lvert#1\rvert}

\newcommand{\Smile}[2]{%
  \mathop{%
    \raisebox{-0.3ex}{\scalebox{1.5}{$\smile$}}%
  }\limits_{#1}^{\raisebox{1.2ex}{\scriptsize$#2$}}%
}

%========================

\begin{document}

\title[A Computational (Co)homological Approach to Contiguity]{A Computational (Co)homological Approach to Contiguity Distance}
\thanks{The first author was partially supported by Xunta de Galicia ED431C 2019/10 with FEDER funds. The second author was partially supported by Deputación Provincial da Coruña (Exp. 2025000022903).
}

\author[E. Mac\'ias]{%
	Enrique Mac\'ias-Virg\'os %etc.
}
		
 \address{%
           	E. Mac\'ias-Virg\'os \\
              CITMAga, Departamento de Matem\'aticas, Universidade de Santiago de Compostela, 15782-SPAIN}
               \email{quique.macias@usc.es}

\author[A. Méndez]{%
	Ángel Méndez-Vázquez %etc.
}
		
 \address{%
           	Ángel Méndez-Vázquez \\
              CITMAga, Departamento de Matem\'aticas, Universidade de Santiago de Compostela, 15782-SPAIN}
               \email{angel.mendez.vazquez@rai.usc.es}

\author[D. Mosquera]{%
	David Mosquera-Lois %etc.
}
	
 \address{%
           	David Mosquera-Lois \\
              Departamento de Matemáticas, Universidade de Vigo, SPAIN}
               \email{david.mosquera.lois@uvigo.gal} 

%%==================================%%
%% sample for unstructured abstract %%
%%==================================%%

\begin{abstract}   
We introduce two new algebraic invariants, the \emph{(co)\-ho\-mo\-lo\-gi\-cal distances} between continuous maps, which provide computable lower bounds for the homotopic distance and strictly refine the classical cup-length estimates.  
We then define the \emph{simplicial cohomological distance} between simplicial maps and prove a convergence theorem showing that, after sufficiently many barycentric subdivisions, it recovers the cohomological distance between the corresponding continuous maps.  
Several explicit computations are presented to illustrate the effectiveness of the proposed approach.

\end{abstract}

%\keywords{Small category, Fibration, Lusternik-Schnirelmann category, Topological complexity, Baues-Wirsching Cohomology, \v{S}varc genus, sectional category}

%%\pacs[JEL Classification]{D8, H51}

%%\pacs[MSC Classification]{35A01, 65L10, 65L12, 65L20, 65L70}

\maketitle

%\tableofcontents

\section*{Introduction}

In \cite{MAC-MOSQ}, the notion of \emph{homotopic distance} $\D(f,g)$ between two continuous maps $f,g\colon X\to Y$ was introduced. This invariant is defined as the least integer $n\geq 0$ for which there exists an open cover $\{U_0,\dots,U_n\}$ of $X$ such that the restrictions $f_{\vert U_j}, g_{\vert U_j}\colon U_j\to Y$ are homotopic, $f_{\vert U_j}\simeq g_{\vert U_j}$, for all $j=0,\dots,n$. %If no such integer exists, one sets $\D(f,g)=\infty$.  

The homotopic distance extends several classical invariants from spaces to maps. In particular, it provides a unified framework that generalizes the Lusternik–Schnirelmann category and the topological complexity.  

While conceptually unifying, $\D(f,g)$ is often challenging to compute in practice. This led to two of the authors \cite{MAC-MOSQ} to introduce the \emph{contiguity or simplicial distance} $\sD(\varphi,\psi)$ between two simplicial maps $\varphi,\psi\colon K \to L$. It is the least integer $n\geq 0$ such that there exists a cover of $K$ by subcomplexes $K_0,\dots,K_n$ with the property that $\varphi_{\vert K_j}$ and $\psi_{\vert K_j}$ lie in the same contiguity class, $\varphi_{\vert K_j}\sim\psi_{\vert K_j}$, for every $j=0,\dots,n$. This invariant was further investigated  in \cite{TT}.  

By construction, the homotopic distance is always bounded above by the contiguity distance of every simplicial approximation. However, a major drawback of the contiguity distance is that it does not necessarily converge to the homotopic distance under subdivision, as illustrated in \cite[Example 4.6]{QUIQUE2}.  

In this article, we introduce two auxiliary invariants, the \emph{co\-ho\-mo\-lo\-gi\-cal distance} $\HD(f,g;R)$ and its homological counterpart $\HhomD(f,g;R)$, defined by requiring an open cover of $X$ such that, for each subset, the restrictions induce the same (co)homology morphism. These invariants retain key topological information while exhibiting markedly better computational behavior. 

We further define a simplicial version, the \emph{simplicial co\-ho\-mo\-lo\-gi\-cal distance} and prove that, unlike simplicial distance, it converges to its continuous version: after enough subdivisions, the simplicial cohomological distance recovers the cohomological distance (Theorem \ref{thm:simplicial_approx_cohomology}).  

Beyond convergence, the cohomological distances provide strictly sharper lower bounds than classical cup-length estimates. To show this, we introduce a notion of \emph{cohomological weight} and we use stable cohomology operations to obtain cohomology classes with weight greater than one (Theorem~\ref{thm:superweight_hd}), leading to strict refinements of cup-length bounds. 

We give examples, including lens spaces, where our cohomological bounds match the known homotopic invariants (cf. Subsections \ref{subsec:Lens_spaces} and \ref{subsection_computations}). Taken together, these results yield a principled, convergent framework to compute approximations on $\D(f,g)$.

\subsection*{Organization} Section~\ref{sec:cohomo_distance} introduces $\HD$ and $\HhomD$, compares them and relates them to existing invariants. Section~\ref{sec:cohomo_weights} develops the cohomological weight formalism to show that the lower bound provided by the cohomological distance improves classical cup-length estimates (Subsection \ref{subsec:Lens_spaces}). Section~\ref{sec:simplicial_cohomo_distance} defines the simplicial cohomological distance and proves the Approximation Theorem~\ref{thm:simplicial_approx_cohomology}. It also reports computations on standard triangulations (Subsection~\ref{subsection_computations}).

\subsection*{Conventions} Throughout this article, $R$ denotes a commutative ring with a unit. All topological spaces and simplicial complexes are assumed to be path-connected. All simplicial complexes are assumed to be finite, as we are primarily interested in a computational perspective.

\section{(Co)Homological Distance}\label{sec:cohomo_distance}
We introduce the cohomological and homological analogues of the homotopic distance. These invariants retain essential topological information while being more suitable for algebraic computation and provide better lower bounds than cup-length invariants.

\subsection{The (co)homological distance}
We define the (co)homological distance between two continuous maps $f,g\colon X\to Y$. 
\begin{definition}
    The {\em cohomological (homological) distance} between $f$ and $g$ with coefficients in $R$, denoted by $\HD(f,g;R)$ (resp. $\HhomD(f,g;R)$), is the least integer $n\geq 0$ for which there exists an open cover $\{U_0,\dots,U_n\}$ of $X$ such that $(f_{\vert U_j})^* = (g_{\vert U_j})^*\colon H^m(Y;R)\to H^m(U_j;R)$ (resp. $(f_{\vert U_j})_* = (g_{\vert U_j})_*\colon H_m(U_j;R)\to H_m(Y;R)$) for all $m\geq 0$ and for all $j=0,\dots,n$.
\end{definition}

\begin{remark}
    We always have $\HD(f,g;R)\leq\D(f,g)$ and $\HhomD(f,g;R)\leq\D(f,g)$.
\end{remark}

\begin{example}\label{ejemplo_Hopf_2}
    Let $p\colon\S^3\to\S^2$ be the Hopf fibration and let $\cte \colon\S^3\to\S^2$ be a constant map. Then $\D(p,\cte)=1$, whereas $\HD(p, \cte ;R)=0$ and $\HhomD(p,\cte;R)=0$. Hence, the above inequalities can be strict.
\end{example}

Let $\id\co X\to X$ be the identity map, $\cte \co X\to X$ a constant map and $\pi_1,\pi_2\co X\times X\to X$ the canonical projections. We define the {\em cohomological (resp. homological) LS category} as $\Hcat(X;R)=\HD(\id,\cte;R)$ (resp. $\HhomD(\id,\cte;R)$) and the {\em cohomological (resp. homological) topological complexity}  $\HTC(X;R)=\HD(\pi_1,\pi_2;R)$ (resp. $\HhomD(\pi_1,\pi_2;R)$).

As a consequence, mimicking the approach in \cite{MAC-MOSQ}, for any pair of maps $f,g\colon X\to Y$, we have:
\begin{enumerate}
    \item $\HD(f,g;R)\leq \Hcat(X;R).$
    \item $\HD(f,g;R)\leq \HTC(Y;R).$
\end{enumerate}

\begin{remark}
    The homological Lusternik-Schnirelmann category was introduced by Fox \cite{FOX} and we recover it as a particular case.
\end{remark}

\subsection{Cup-length}

In \cite{MAC-MOSQ}, the authors established the following cohomological lower bound for the homotopic distance. Let $\J(f,g;R)\subset H^{*}(X;R)$ be the image of  
$$f^{*}-g^{*}\colon H^{*}(Y;R)\to H^{*}(X;R).$$

\begin{definition}
    We denote by $\lcp\J(f,g;R)$ the least integer $n\geq 0$ such that every product of $n+1$ elements of $\J(f,g;R)$ vanishes.
\end{definition}

\begin{theorem}\label{teorema_desigualdad_lcpJ_HD}
    It holds that $\lcp\J(f,g;R)\leq \HD(f,g;R)$.
\end{theorem}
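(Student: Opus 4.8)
The plan is to adapt the classical argument bounding Lusternik–Schnirelmann category below by cup-length, rephrased for a pair of maps via relative cohomology.

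Write $n=\HD(f,g;R)$ and fix an open cover $\{U_0,\dots,U_n\}$ of $X$ realizing this value, so that $(f|_{U_j})^{*}=(g|_{U_j})^{*}\colon H^{*}(Y;R)\to H^{*}(U_j;R)$ for every $j$; let $\iota_j\colon U_j\hookrightarrow X$ be the inclusion. \textbf{Step 1 (local vanishing).} Every element of $\J(f,g;R)$ restricts to zero on each $U_j$: if $\alpha=(f^{*}-g^{*})(\beta)$ with $\beta\in H^{*}(Y;R)$, then naturality of induced maps gives $\iota_j^{*}\alpha=(f|_{U_j})^{*}\beta-(g|_{U_j})^{*}\beta=0$, so $\alpha\in\ker\big(\iota_j^{*}\colon H^{*}(X;R)\to H^{*}(U_j;R)\big)$.

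\textbf{Step 2 (lifting to relative classes).} Since $U_j$ is open in $X$, the long exact cohomology sequence of the pair $(X,U_j)$ identifies $\ker\iota_j^{*}$ with the image of $\rho_j\colon H^{*}(X,U_j;R)\to H^{*}(X;R)$. Hence, given any $\alpha_0,\dots,\alpha_n\in\J(f,g;R)$, we may choose relative classes $\tilde\alpha_j\in H^{*}(X,U_j;R)$ with $\rho_j(\tilde\alpha_j)=\alpha_j$. \textbf{Step 3 (relative cup product).} Because the $U_j$ are open, they form an excisive family, so the relative cup product yields a homomorphism
$$H^{*}(X,U_0;R)\otimes\cdots\otimes H^{*}(X,U_n;R)\longrightarrow H^{*}\big(X,\textstyle\bigcup_{j=0}^{n}U_j;R\big)=H^{*}(X,X;R)=0,$$
compatible with the restriction maps to $H^{*}(X;R)$. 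Therefore $\alpha_0\smile\cdots\smile\alpha_n$ is the image of $\tilde\alpha_0\smile\cdots\smile\tilde\alpha_n=0$, and so it vanishes. As the $\alpha_j$ were arbitrary, every product of $n+1$ elements of $\J(f,g;R)$ is zero, i.e. $\lcp\J(f,g;R)\le n=\HD(f,g;R)$.

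The main obstacle is bookkeeping rather than depth: one must ensure the relative cup product with target $H^{*}(X,\bigcup_j U_j;R)$ is available and composes correctly, which is exactly why the cover is taken to be \emph{open} (so that all the couples $\{U_i,U_j\}$, and iteratively $\{U_0,\dots,U_n\}$, are excisive and the iterated relative products are defined). For finite simplicial complexes one can instead work with closed covers by subcomplexes and the simplicial relative cup product, a viewpoint that will be relevant for the simplicial version later. A minor point to keep track of is that the lifts $\tilde\alpha_j$ a priori live in different degrees, so Steps 2 and 3 should be read degreewise.
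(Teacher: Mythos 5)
Your proof is correct and follows essentially the same route as the paper: restrict $\J(f,g;R)$ into each $U_j$, use the long exact sequence of $(X,U_j)$ to lift to relative classes, and kill the product of $n+1$ lifts via the relative cup product landing in $H^{*}(X,\bigcup_j U_j;R)=H^{*}(X,X;R)=0$. Your explicit remark that openness of the $U_j$ guarantees the excisiveness needed for the iterated relative cup product is a welcome bit of care that the paper leaves implicit, but it is the same argument.
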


\begin{proof}
    Suppose $\HD(f,g;R)=n$, and let $\{U_0,\dots,U_n\}$ be an open cover of $X$ such that $(f_{\vert U_j})^* = (g_{\vert U_j})^*$ for every $j=0,\dots,n$.  
    Consider an arbitrary product $x_0\smile\cdots\smile x_n$ of $n+1$ elements with $x_k\in\J(f,g;R)$ for each $k=0,\dots,n$. For each open set $\iota\colon U=U_k\hookrightarrow X$ in the cover, consider the long exact sequence of the pair $(X,U)$:
    $$
    \begin{tikzcd}[row sep=large,column sep=normal]
        \cdots \arrow[r] & {H^m(X,U;R)} \arrow[r, "j_U"] & H^m(X;R) \arrow[r, "\iota^*"] & H^m(U;R) \arrow[r] & \cdots \\
                         &                             & H^m(Y;R) \arrow[u, "f^*-g^*"] \arrow[ru, "(f_{\vert U})^*-(g_{\vert U})^*"'] & &
    \end{tikzcd}
    $$

    Since $(f_{\vert U})^*-(g_{\vert U})^*=0$, we have $\J(f,g;R)\subseteq \ker \iota^* = \im j_U$.  
    Thus, for each element $x_k$ of the product $x_0\smile\cdots\smile x_n$, there exists an element $\widetilde{x}_k\in H^*(X,U_k;R)$ such that $j_{U_k}(\widetilde{x}_k)=x_k$.  Consider the following commutative diagram:
    $$
    \begin{tikzcd}[row sep=huge,column sep=huge]
        {H^*(X,U_0;R)\otimes \cdots  \otimes   H^*(X,U_n;R)} \arrow[r, "\smile"] \arrow[d, "{j_{U_0}}\otimes\cdots\otimes {j_{U_n}}"] 
        & {H^*(X ,\bigcup_{k=0}^n U_k;R )} \arrow[d, "j_{X}"] \\
        H^*(X;R)\otimes\cdots  \otimes   H^*(X;R) \arrow[r, "\smile"] & H^*(X;R)
    \end{tikzcd}
    $$

    As $\bigcup_{k=0}^n U_k = X$, we have $H^*(X ,\bigcup_{k=0}^n U_k;R )=H^*(X,X;R)=0$. Consequently, 
    \begin{equation*}\begin{split}
    x_0\smile\cdots\smile x_n &= j_{U_0}(\widetilde{x}_0)\smile\cdots\smile j_{U_n}(\widetilde{x}_n)\\
     &= j_{X}(\widetilde{x}_0\smile\cdots\smile\widetilde{x}_n)= j_X(0)=0.
    \end{split}\end{equation*}
    Therefore $\lcp\J(f,g;R)\leq n=\HD(f,g;R)$, as required.
\end{proof}

For the LS category, taking $f=\id_X$ and $g=\cte$ (a constant map), we have
$$\J(\id_X,\cte;R)=H^{>0}(X;R),$$
so that $\lcp\J(\id_X,\cte;R)$ coincides with the usual \emph{cup-length} of $X$, denoted by $\lcp(X;R)$. This shows that the cohomological LS category improves the classical cup-length lower bound:
$$
\lcp(X;R)\leq \Hcat(X;R)\leq \cat(X).
$$

Now let $\F$ be a field. For the topological complexity, we have the {\em zero divisors cup-length} of $X$, denoted by $\lcp(\ker\smile;\F)$, defined as the least integer $n\geq 0$ such that every product $x_0\cdot\ldots\cdot x_n$ of $n+1$ elements in the kernel of the cup product
$$
\smile\colon H^*(X;\F)\otimes H^*(X;\F)\longrightarrow H^*(X;\F)
$$
vanishes. To show $\lcp(\ker\smile,\F)\leq\HTC(X;\F)$ we need the following result.

\begin{theorem}\label{teorema_longitud_ideal}
    It holds that $\lcp\J(f,g;R)=\lcp\langle\J(f,g;R)\rangle$, where $\langle\J(f,g;R)\rangle$ denotes the ideal generated by $\J(f,g;R)$.  
\end{theorem}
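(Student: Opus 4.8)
The plan is to prove the equality by establishing the two inequalities separately; writing $J=\J(f,g;R)$ for brevity, the inequality $\lcp J \le \lcp\langle J\rangle$ is immediate, since $J\subseteq\langle J\rangle$ forces every product of $n+1$ elements of $J$ to vanish as soon as every product of $n+1$ elements of $\langle J\rangle$ does. So the real content is the reverse inequality $\lcp\langle J\rangle \le \lcp J$, which is the generalization of the classical fact (for topological complexity) that the zero-divisors cup-length can be computed either from the subgroup of zero-divisors or from the ideal it generates.

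Assume $\lcp J = n$, i.e.\ every product of $n+1$ elements of $J$ vanishes. First I would record that $J$ is a \emph{graded} subgroup of $H^*(X;R)$, being the image of the degree-zero homomorphism $f^*-g^*$; in particular each homogeneous component of an element of $J$ again lies in $J$. Consequently $\langle J\rangle$ is the set of finite sums $\sum_i a_i x_i$ with $a_i\in H^*(X;R)$ and $x_i\in J$, and by decomposing into homogeneous parts and expanding we may assume in any such expression that every $a_i$ is a homogeneous element of $H^*(X;R)$ and every $x_i$ a homogeneous element of $J$.

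Now take arbitrary $y_0,\dots,y_n\in\langle J\rangle$ and write each $y_k=\sum_i a_{k,i}\,x_{k,i}$ in this homogeneous normal form. Expanding the cup product $y_0\smile\cdots\smile y_n$ by distributivity produces a sum of terms of the form $(a_{0,i_0}x_{0,i_0})\smile\cdots\smile(a_{n,i_n}x_{n,i_n})$. Using graded-commutativity of $H^*(X;R)$ to move all the factors $a_{k,i_k}$ to the front, each term equals $\pm\,(a_{0,i_0}\smile\cdots\smile a_{n,i_n})\smile(x_{0,i_0}\smile\cdots\smile x_{n,i_n})$, whose second factor is a product of $n+1$ elements of $J$ and hence vanishes by hypothesis. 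Therefore $y_0\smile\cdots\smile y_n=0$, which gives $\lcp\langle J\rangle\le n=\lcp J$ and completes the proof.

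I expect no genuine obstacle here: the argument is essentially bookkeeping. The only point that needs care is the reduction to homogeneous generators \emph{before} invoking graded-commutativity, since for inhomogeneous elements reordering a product is not merely a change of sign; once that reduction is in place, the rest is a routine expansion.
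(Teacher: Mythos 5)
Your proof is correct and follows essentially the same strategy as the paper's: write each element of the ideal as a sum of terms $a\smile x$ with $x\in J$, expand the $(n+1)$-fold cup product by distributivity, and regroup each summand via graded-commutativity so that a product of $n+1$ elements of $J$ splits off as a factor. If anything you are more careful than the paper, which writes the regrouping without signs and without first passing to homogeneous components; your remarks that $J$ is a graded subgroup (so homogeneous components remain in $J$) and that the resulting signs cannot affect vanishing are exactly the details needed to make that step fully rigorous.
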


\begin{proof}
    The inequality $\lcp\J(f,g;R)\leq\lcp\langle\J(f,g;R)\rangle$ is clear. Let us prove the converse. We show that if there exists $x_1,\ldots,x_n\in\left\langle\J(f,g;R)\right\rangle$ such that $x_1\smile\cdots\smile x_n \neq 0$, then $\lcp\J(f,g;R)\geq n$.
    
    Each $x_i\in\left\langle\J(f,g;R)\right\rangle$ can be written as a finite sum
    $$x_i=\sum_{k}\Bigl(\alpha_{k}\smile\bigl(f^*(y_{k})-g^*(y_{k})\bigr)\Bigr)$$
    for some $y_{k}\in H^*(Y;R)$ and $\alpha_{k}\in H^*(X;R)$. As the cup product is distributive, we deduce
    \begin{equation*}\begin{split}
    x_i\smile x_j &= \left(\sum_{k_i}\Bigl(\alpha_{k_i}\smile\bigl(f^*(y_{k_i})-g^*(y_{k_i})\bigr)\Bigr)\right) \smile \left(\sum_{k_j}\Bigl(\alpha_{k_j}\smile\bigl(f^*(y_{k_j})-g^*(y_{k_j})\bigr)\Bigr)\right)\\
     &=\sum_{k_i, k_j}
  \biggl(
    \Bigl(\alpha_{k_i} \smile \alpha_{k_j}\Bigr)
    \smile
    \Bigl(
      \bigl(f^*(y_{k_i}) - g^*(y_{k_i})\bigr)
      \smile
      \bigl(f^*(y_{k_j}) - g^*(y_{k_j})\bigr)
    \Bigr)
  \biggr).
    \end{split}\end{equation*}
    Since the product
    $$x_1\smile\cdots\smile x_n=\sum_{k_1,\ldots,k_n}\Biggl(\biggl(\Smile{i=1}{n}\alpha_{k_i}\biggr)\smile\biggl(\Smile{i=1}{n}\bigl(f^*(y_{k_i})-g^*(y_{k_i})\bigr)\biggr)\Biggr)$$
    is nonzero, there exists at least one tuple $(k_1,\ldots,k_n)$ whose term in the summand is nonzero. In particular, for this tuple, the product 
    $$\Smile{i=1}{n}\bigl(f^*(y_{k_i})-g^*(y_{k_i})\bigr)=\bigl(f^*(y_{k_1})-g^*(y_{k_1})\bigr)\smile\cdots\smile\bigl(f^*(y_{k_n})-g^*(y_{k_n})\bigr)$$
    is nonzero, and therefore we conclude $\lcp\J(f,g;R)\geq n$.
\end{proof}

\begin{corollary}
    It holds that $\lcp\J(\pi_1,\pi_2;\F)=\lcp(\ker\smile;\F)$.
\end{corollary}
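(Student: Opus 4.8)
The plan is to combine Theorem~\ref{teorema_longitud_ideal} with the classical description of the ideal of zero divisors. By Theorem~\ref{teorema_longitud_ideal} we have $\lcp\J(\pi_1,\pi_2;\F)=\lcp\langle\J(\pi_1,\pi_2;\F)\rangle$, the length of the ideal of $H^*(X\times X;\F)$ generated by $\J(\pi_1,\pi_2;\F)$. Since the quantity $\lcp$ depends only on the underlying subset, the corollary will follow once we exhibit, after identifying the relevant rings, the equality of ideals
\[
\langle\J(\pi_1,\pi_2;\F)\rangle=\ker\smile .
\]

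Next I would invoke the Künneth theorem over the field $\F$. Writing $A=H^*(X;\F)$, the cohomology cross product is an isomorphism of graded $\F$-algebras $A\otimes_\F A\cong H^*(X\times X;\F)$, where $A\otimes_\F A$ carries the Koszul-signed tensor-product multiplication; under this isomorphism $\pi_1^*(y)=y\otimes 1$ and $\pi_2^*(y)=1\otimes y$. Hence $\J(\pi_1,\pi_2;\F)$ is identified with the linear subspace $\{\,y\otimes 1-1\otimes y : y\in A\,\}$, and the cup product $\smile\colon A\otimes_\F A\to A$, $a\otimes b\mapsto a\smile b$, becomes a homomorphism of graded $\F$-algebras --- here the graded commutativity of $A$ is exactly what absorbs the Koszul sign. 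Thus the claim reduces to a purely algebraic statement: for a graded-commutative $\F$-algebra $A$, the kernel of the multiplication $\mu\colon A\otimes_\F A\to A$ equals the ideal $I$ generated by $\{\,y\otimes 1-1\otimes y : y\in A\,\}$.

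To prove this algebraic fact, the inclusion $I\subseteq\ker\mu$ is immediate, since $\mu(y\otimes 1-1\otimes y)=0$ and $\ker\mu$ is an ideal. For the reverse inclusion I would pass to the quotient $B=(A\otimes_\F A)/I$. In $B$ one has $y\otimes 1=1\otimes y$ for every $y$, so for homogeneous $a,b$ we get $a\otimes b=(a\otimes 1)(1\otimes b)\equiv(a\otimes 1)(b\otimes 1)=(a\smile b)\otimes 1 \pmod I$; hence the algebra homomorphism $s\colon A\to B$, $a\mapsto a\otimes 1$, is surjective. Because $\mu$ vanishes on $I$, it factors as $\mu=\bar\mu\circ q$, where $q\colon A\otimes_\F A\to B$ is the quotient map and $\bar\mu\colon B\to A$ satisfies $\bar\mu\circ s=\id_A$. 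A surjection admitting a left inverse is bijective, so $s$ is an isomorphism, and this forces $\bar\mu=s^{-1}$ to be an isomorphism too; since $\bar\mu$ is injective, $\ker\mu=\ker q=I$. Transporting this back through the Künneth identification gives $\langle\J(\pi_1,\pi_2;\F)\rangle=\ker\smile$, which together with the first paragraph proves the corollary.

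The field hypothesis is used in exactly one place, and it is the only substantive input: over a field the cohomology cross product $A\otimes_\F A\to H^*(X\times X;\F)$ is a ring isomorphism, with no $\mathrm{Tor}$ correction terms and no failure of surjectivity. Everything else is formal, and I expect the only step calling for a little care to be the sign bookkeeping in verifying that $\mu$ is an algebra homomorphism, which is routine given graded commutativity of $H^*(X;\F)$.
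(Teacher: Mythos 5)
Your proof is correct and follows essentially the same route as the paper: after invoking Theorem~\ref{teorema_longitud_ideal}, both arguments use the K\"unneth isomorphism over $\F$ to identify $\J(\pi_1,\pi_2;\F)$ with $\{y\otimes 1-1\otimes y\}$ and establish $\langle\J(\pi_1,\pi_2;\F)\rangle=\ker\smile$ from the identity $a\otimes b-(a\smile b)\otimes 1=(a\otimes 1)(1\otimes b-b\otimes 1)$. The only difference is cosmetic --- you package the reverse inclusion via the split-surjection argument for $A\to(A\otimes_\F A)/\langle\J\rangle$, whereas the paper carries out the equivalent element-level computation directly on $\sum_i a_i\otimes b_i\in\ker\smile$.
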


\begin{proof}
    By the Künneth formula, if $z$ is an element of $\J(\pi_1,\pi_2;\F)$, then $z=x\otimes 1-1\otimes x$ for some $x\in H^*(X;\F)$, so $z\in\ker\smile$.
    
    Conversely, if $z=\sum_i a_i\otimes b_i\in\ker\smile$, we have
    \begin{equation*}\begin{split}
         z = \sum_i a_i\otimes b_i &= \sum_i a_i(1\otimes b_i-b_i\otimes 1) + (a_i\smile b_i)\otimes 1 \\
         &= \sum_i (a_i(1\otimes b_i-b_i\otimes 1)) + \left(\sum_i (a_i\smile b_i)\right)\otimes 1 \\
         &= \sum_i a_i(1\otimes b_i-b_i\otimes 1) \in \left\langle \J(\pi_1, \pi_2)\right\rangle.
    \end{split}\end{equation*}
    Therefore, $\left\langle \J(\pi_1, \pi_2)\right\rangle = \ker\smile$ and the results follows from Proposition \ref{teorema_longitud_ideal}.
\end{proof}

This shows that the cohomological topological complexity improves the classical zero divisors cup-length lower bound:
$$
\lcp(\ker\smile;\F)\leq \HTC(X;\F)\leq \TC(X).
$$

\subsection{Comparison between cohomological and homological distances}
We compare the homological and cohomological versions of the distance. We prove that they coincide when coefficients are taken in a field, and we exhibit examples showing that no general inequality holds between them.

\begin{theorem}
    Let $\F$ be a field and $X$ a topological space with finitely generated cohomology in each degree. Then
    $$
    \HhomD(f,g;\F)=\HD(f,g;\F).
    $$
\end{theorem}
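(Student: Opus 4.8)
The plan is to reduce the equality of these two numerical invariants to a \emph{local} equivalence of conditions on the open sets of a cover, and then to settle that equivalence by dualising with the universal coefficient theorem over a field. The first step is the local reformulation of both invariants: an open cover $\{U_0,\dots,U_n\}$ of $X$ witnesses $\HD(f,g;\F)\le n$ precisely when, for every $j$, one has $(f|_{U_j})^*=(g|_{U_j})^*\colon H^*(Y;\F)\to H^*(U_j;\F)$, and the same cover witnesses $\HhomD(f,g;\F)\le n$ precisely when, for every $j$, one has $(f|_{U_j})_*=(g|_{U_j})_*\colon H_*(U_j;\F)\to H_*(Y;\F)$. Hence it suffices to prove that, for every open $U\subseteq X$, writing $a=f|_U$ and $b=g|_U\colon U\to Y$,
$$a^*=b^*\colon H^*(Y;\F)\to H^*(U;\F)\qquad\Longleftrightarrow\qquad a_*=b_*\colon H_*(U;\F)\to H_*(Y;\F).$$
Once this is known, the family of covers witnessing $\HD(f,g;\F)\le n$ coincides with the family witnessing $\HhomD(f,g;\F)\le n$ for each $n$, so the two distances are equal.

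For the local equivalence I would invoke the universal coefficient theorem over the field $\F$: for every space $Z$ and every $m\ge 0$ there is an isomorphism $\Phi_Z\colon H^m(Z;\F)\xrightarrow{\ \cong\ }\Hom_\F\!\bigl(H_m(Z;\F),\F\bigr)$, natural in $Z$. Naturality applied to a continuous map $h\colon Z\to W$ says that $\Phi_Z\circ h^*=(h_*)^\vee\circ\Phi_W$; that is, under the isomorphisms $\Phi$ the map $h^*$ is identified with the $\F$-linear transpose of $h_*$. Applying this to $a$ and $b$ and subtracting yields $a^*-b^*=\Phi_U^{-1}\circ(a_*-b_*)^\vee\circ\Phi_Y$. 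Since $\Phi_U$ and $\Phi_Y$ are isomorphisms, $a^*=b^*$ holds if and only if $(a_*-b_*)^\vee=0$; and dualisation is faithful on $\F$-linear maps — a nonzero linear map has some value detected by a linear functional — so this is equivalent to $a_*=b_*$. This proves the equivalence degree by degree, hence the theorem. (One may equivalently phrase the whole argument via the Kronecker pairing $H^m(Z;\F)\otimes H_m(Z;\F)\to\F$, which over a field is non-degenerate and natural and satisfies $\langle h^*\xi,z\rangle=\langle\xi,h_*z\rangle$; then both conditions reduce to $\langle\xi,a_*z\rangle=\langle\xi,b_*z\rangle$ for all $\xi$ and all $z$.)

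The step that needs the most care is the variance bookkeeping inside the naturality statement — verifying that $h^*$ really corresponds to the transpose of $h_*$, and not to some twisted version — rather than anything substantial; the rest is formal. It is worth pointing out where the finite generation hypothesis is used: it serves only to keep $X$ and $Y$ in the finite-dimensional range, where $\Phi_Z$ is the familiar dimension-preserving duality and the faithfulness of dualisation is transparent. The argument above goes through without it — in fact the open sets $U$ need not have finitely generated (co)homology regardless — at the cost of one appeal to the existence of a vector-space basis, so the hypothesis is a convenience rather than an essential restriction.
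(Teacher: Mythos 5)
Your proof is correct and follows the same route as the paper: both invoke the universal coefficient theorem over a field to identify $h^*$ with the transpose of $h_*$ and then use faithfulness of $\F$-linear dualisation, applied locally on each open set of the cover. You simply spell out the commutativity that the paper encodes in an informal diagram, and your closing remark is accurate — the finitely-generated hypothesis is not actually needed (the UCT over a field and the faithfulness of $(-)^\vee$ hold for arbitrary vector spaces, which is fortunate since the open sets $U_j$ need not inherit finite generation from $X$).
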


\begin{proof}
    Let $U\subseteq X$ be an open subset. Consider the following commutative diagram obtained from the Universal Coefficient Theorem: %\cite[Theorem 53.5]{MUNKRES}):
    $$
    \begin{tikzcd}[row sep=huge,column sep=huge]
    H^n(Y;\F) \arrow[r, "(f_{\vert U})^*", shift left=1.5] \arrow[d, "\cong"'] 
               \arrow[r, "(g_{\vert U})^*"', shift right=1.5] 
               & H^n(U;\F) \arrow[d, "\cong"] \\
    H_n(Y;\F) 
              & H_n(U;\F) \arrow[l, "(f_{\vert U})_*"', shift right=1.5] 
                          \arrow[l, "(g_{\vert U})_*", shift left=1.5]
    \end{tikzcd}
    $$
    %We have $(f_{\vert U})^*=(g_{\vert U})^*$ if and only if $(f_{\vert U})^*-(g_{\vert U})^*$ is the zero map. This is equivalent to requiring that $\ev\bigl((f_{\vert U})^*-(g_{\vert U})^*\bigr)={(f_{\vert U})_*}^{\vee}-{(g_{\vert U})_*}^{\vee}$ is the zero map. Now, fixing bases $B$ and $B'$ of $H_n(U;\F)$ and $H_n(Y;\F)$, and the corresponding dual bases $\widetilde{B}$ and $\widetilde{B}'$ of $\Hom(H_n(U;\F),\F)$ and $\Hom(H_n(Y;\F),\F)$, the matrix of ${(f_{\vert U})_*}^{\vee}-{(g_{\vert U})_*}^{\vee}$ with respect to $\widetilde{B}'$ and $\widetilde{B}$ is the transpose of the matrix of $(f_{\vert U})_*-(g_{\vert U})_*$ with respect to $B$ and $B'$. Hence $(f_{\vert U})_*-(g_{\vert U})_*$ is the zero map if and only if ${(f_{\vert U})_*}^{\vee}-{(g_{\vert U})_*}^{\vee}$ is zero. 
    We deduce $(f_{\vert U})_*=(g_{\vert U})_*$ if and only if $(f_{\vert U})^*=(g_{\vert U})^*$.
\end{proof}

In general, no direct inequality exists between the homological and the cohomological distance, as illustrated in Example~\ref{ejemplo_desigualdad_hom_cohom_RP2}. The same example also shows that the homological distance improves in some cases the lower bound provided by $\lcp\J(f,g;R)$.

\begin{example}\label{ejemplo_desigualdad_hom_cohom_RP2}
    Let $X=\RP^2$ be the real projective space endowed with the CW-complex structure with one cell in each dimension. Consider the quotient map collapsing the $1$-skeleton to a point,
    $$\pi\colon \RP^2\longrightarrow{\RP^2}/\,{\S^1}\simeq\S^2.$$
    
    The induced homomorphism in integral homology $$\pi_*\colon H_n(\RP^2;\Z)\to H_n(\S^2;\Z)$$ is trivial for every $n>0$. Let $\cte\colon \RP^2\to\S^2$ be a constant map. Then, $\HhomD(\pi,\cte;\Z)=0$. However, since $\pi$ maps the $2$-cell of $\RP^{2}$ onto the $2$-cell of $\S^2$, the induced homomorphism in cohomology
    $$\pi^*\colon H^2(\S^2;\Z)\to H^2(\RP^2;\Z)$$ is the canonical projection $\Z\to\Z_{2}$. Hence $\HD(\pi,\cte;\Z)>0$, thus demonstrating an example in which $\HD(f,g;R)\nleq \HhomD(f,g;R)$. Moreover, in this case we also have $\J(\pi,\cte;\Z)=H^{>0}(\RP^{2};\Z)$,
    so that
    $$
    \lcp\J(\pi,\cte;\Z)=\lcp(\RP^{2};\Z)=1,
    $$
    which shows that $\lcp\J(f,g;R)\nleq \HhomD(f,g;R)$.

    Dually, consider the inclusion of the $1$-skeleton
    $$
    \iota\colon X_{1}\cong \S^1\hookrightarrow \RP^{2}.
    $$
    In this case, the induced map in cohomology 
    $$
    \iota^{*}\colon H^{n}(\RP^{2};\Z)\to H^{n}(\S^{1};\Z)
    $$
    is trivial for every $n>0$, whereas the induced morphism in homology 
    $$
    \iota_{*}\colon H_{1}(\S^{1};\Z)\to H_{1}(\RP^{2};\Z)
    $$
    is the canonical projection $\Z\to\Z_{2}$. If $\cte\colon\S^1\to\RP^2$ denotes a constant map, then $\HD(\iota,\cte;\Z)=0$ while $\HhomD(\iota,\cte;\Z)=1$. This shows that, in general, $\HhomD(f,g;R)\nleq \HD(f,g;R)$. Furthermore, it provides an example in which the homological distance gives a better lower bound than the cup-length invariant since $\lcp\J(\iota,\cte;\Z)=0$.   
\end{example}

\section{Cohomological weights}\label{sec:cohomo_weights}

In this section we show that the cohomological distance $\HD(f,g;R)$ can give strictly better approximations for the homotopic distance $\D(f,g)$ than the cup-length invariant $\J(f,g;R)$.

Inspired by the work of Fadell and Husseini on LS category \cite{FadellHusseini1991} (further investigated by Rudyak \cite{Rudyak}), its generalization to topological complexity by Farber and Grant \cite{FarberGrant2008}, and the homotopic weight defined by two of the authors in \cite{MAC-MOSQ}, we introduce a notion of cohomological weight.

\subsection{The Cohomological Weight}

Let $f,g\colon X\to Y$ be continuous maps between ANR spaces.

\begin{definition}
    We say a nonzero class $u \in H^*(X;R)$ has \emph{cohomological weight} $\Hw(u;R) = k+1$, if $k$ is the greatest integer such that, for any continuous map $\phi\colon A\to X$ satisfying $\HD(f\circ\phi, g\circ\phi; R) \le k$, we have $\phi^*(u) = 0 \in H^*(A;R)$. We set $\Hw(0;R)=\infty$.
\end{definition}

\begin{remark}
    This definition is the cohomological counterpart of the \emph{homotopy weight} $\hw(u)$ defined in \cite[Definition 5.5]{MAC-MOSQ}, which generalizes Rudyak's strict category weight \cite{Rudyak}. Obviously $\hw(u)\ge \Hw(u;R)$.
\end{remark}

From now on we shall assume that our cohomology classes are in $\J(f,g;R)$.

\begin{lemma}\label{lem:minimal_hd}
    If $u \in \J(f,g;R)$, then $1\le \Hw(u; R)$.
\end{lemma}

\begin{proof}
    If $\HD(f\circ\phi, g\circ\phi; R)=0$ then $(f\circ\phi)^* = (g\circ\phi)^*$. Since $u \in \J(f,g;R)$, there exists an element $y \in H^*(Y;R)$ such that $$u = f^*(y) - g^*(y).$$ By applying $\phi^*$, we get $\phi^*(u) = (f\circ\phi)^*(y) - (g\circ\phi)^*(y)=0$.
\end{proof}

\begin{lemma}\label{lem:weight_le_distance}
    If $u \in \J(f,g;R)$ and $u \neq 0$, then $$\Hw(u;R) \le \HD(f,g;R).$$
\end{lemma}

\begin{proof}
    Assume for contradiction that $\Hw(u;R)>\HD(f,g;R)=n$. Then $\HD(f\circ\id_X,g\circ\id_X;R)\leq n$, so $u=\id_X^*(u)=0$, which contradicts the hypothesis $u \neq 0$.
\end{proof}

\begin{proposition}\label{prop:subadd_hd}
    If $u_0, \dots, u_n \in \J(f,g;R)$, then
    $$
    \sum_{k=0}^n \Hw(u_k; R)\le\Hw(u_0 \smile \dots \smile u_n; R).
    $$
\end{proposition}

\begin{proof}
    We will prove it for the case $n=1$. The general result then follows by induction. Let $\Hw(u_0;R)=i+1$ and $\Hw(u_1;R)=j+1$. We want to prove that $\Hw(u_0\smile u_1;R)\ge i+j+2$. Let $\phi\colon A\to X$ be a continuous map such that $\HD(f\circ\phi,g\circ\phi;R)\le i+j+1$. There exists an open cover $\{U_0,\ldots U_{i+j+1}\}$ of $A$ such that $(f\circ\phi_{\vert U_k})^*=(g\circ\phi_{\vert U_k})^*$ for every $k=0,\dots,i+j+1$.

    Now we define $V_0:=U_0\cup\dots\cup U_i$ and $V_1:=U_{i+1}\cup\dots\cup U_{i+j+1}$. Then $\HD(f\circ\phi_{\vert V_0},g\circ\phi_{\vert V_0};R)\le i$ and $\HD(f\circ\phi_{\vert V_1},g\circ\phi_{\vert V_1};R)\le j$ so $\phi^*_{\vert V_0}(u_0)=0$ and $\phi^*_{\vert V_1}(u_1)=0$. Consider the long exact sequence of the pair $(A,V_0)$:
    $$\begin{tikzcd}
    \dots \arrow[r] & {H^m(A,V_0;R)} \arrow[r, "j_{V_0}"] & H^m(A;R) \arrow[r, "\iota_0^*"]                                    & H^m(V_0;R) \arrow[r] & \dots \\
                    &                                     & H^m(X;R) \arrow[u, "\phi^*"] \arrow[ru, "\phi_{\vert V_0}^*"'] &                      &      
    \end{tikzcd}$$
    As $\phi^*_{\vert V_0}(u_0)=\iota_0^*\circ\phi^*(u_0)=0$, there exists an element $\wtilde{u}_0\in H^*(A,V_0;R)$ such that $j_{V_0}(\wtilde{u}_0)=\phi^*(u_0)$. Analogously, $j_{V_1}(\wtilde{u}_1)=\phi^*(u_1)$ for some $\wtilde{u}_1\in H^*(A,V_1;R)$. By the naturality of the cup product, we conclude
    \begin{align*}
       \phi^*(u_0\smile u_1) &= \phi^*(u_0)\smile\phi^*(u_1)=j_{V_0}(\wtilde{u}_0)\smile j_{V_1}(\wtilde{u}_1)\\
        &= j_{V_0\cup V_1}(\wtilde{u}_0\smile \wtilde{u}_1)=0\in H^*(A,V_0\cup V_1;R).\qedhere
    \end{align*}
\end{proof}

\begin{corollary}\label{cor:bound_hd}
    If $u_0, \dots, u_n \in \J(f,g;R)$ and $u_0 \smile \dots \smile u_n \neq 0$, then
    $$\sum_{k=0}^n \Hw(u_k; R)\le \HD(f,g;R).$$
\end{corollary}

\begin{proof}
    Denote $w=u_0 \smile \dots \smile u_n$. By Lemma \ref{lem:weight_le_distance} and Proposition \ref{prop:subadd_hd}, we have $$\sum_{k=0}^n \Hw(u_k; R)\le \Hw(w;R) \le\HD(f,g;R).\qedhere$$
\end{proof}

The key to refining the cup-length is finding classes in $\J(f,g;R)$ with weight $\ge 2$. This can be achieved using cohomology operations, as we show in Theorem \ref{thm:superweight_hd}.

Recall that a \emph{stable cohomology operation} is a natural transformation 
$$\theta \colon H^*(-;R) \longrightarrow H^{*+i}(-;S)$$
that commutes with the suspension isomorphism (see \cite{CohomologyOperations} for more details). The \emph{excess} of $\theta$, denoted by $e(\theta)$, is the largest integer $n>0$ such that $\theta(u)=0$ for every $u\in H^m(X;R)$ with $m<n$.

\begin{theorem}\label{thm:superweight_hd}
Let $\theta \colon H^*(-;R) \to H^{*+i}(-;S)$ be a stable cohomology operation with excess $e(\theta) \ge n > 0$. If $u \in \J(f,g;R) \cap H^n(X;R)$, then its image $\theta(u)$ has weight at least 2:
$$ \Hw(\theta(u); R) \ge 2. $$
\end{theorem}

\begin{proof}
    Let $\phi\colon A \to X$ be a map satisfying $\HD(f\circ\phi, g\circ\phi; R) \le 1$. Therefore, there exists an open cover $A = U_0 \cup U_1$ such that $$(f\circ\phi_{\vert U_j})^* = (g\circ\phi_{\vert U_j})^*$$ for $j=0, 1$. Since $u\in\J(f,g;R)$, we have $u=f^*(y)-g^*(y)$ for some $y \in H^*(Y;R)$. Thus,
    $$ \phi_{\vert U_j}^*(u) = \phi_{\vert U_j}^*(f^*(y)-g^*(y))=(f\circ\phi_{\vert U_j})^*(y)-(g\circ\phi_{\vert U_j})^*(y)=0.$$
    
    Now consider the Mayer-Vietoris exact sequence for the cover $A = U_0 \cup U_1$:
    $$ \cdots \to H^{n-1}(U_0 \cap U_1; R) \xrightarrow{\delta} H^n(A; R) \xrightarrow{i^*} H^n(U_0; R) \oplus H^n(U_1; R) \to \cdots $$
    As $i^*(\phi^*(u)) = 0$, there exists $v \in H^{n-1}(U_0 \cap U_1; R)$ such that $\phi^*(u) = \delta(v)$. By naturality, and since $\theta$ is a stable operation, 
    $$ \phi^*(\theta(u)) = \theta(\phi^*(u)) = \theta(\delta(v))=\delta(\theta(v)). $$
    The class $v$ has degree $\deg(v) = n-1 < e(\theta)$, so $\theta(v) = 0$. Thus we conclude $\phi^*(\theta(u)) = \delta(0) = 0$.
\end{proof}

\subsection{Example of Strict Refinement: Lens spaces}\label{subsec:Lens_spaces}

We now apply this machinery to show that $\Hcat$ and $\HTC$ are strict refinements of their respective cup-length bounds.

Let $L_p:=L(p;1)$ be the 3-dimensional lens space (for $p$ an odd prime). Its $\Z_p$-cohomology algebra is given by $$H^*(L_p; \Z_p) \cong \Lambda(x_1) \otimes \Z_p[y_2] / (y_2^2),$$ with $\abs{x_1}=1$ and $\abs{y_2}=2$. The lower bound $\lcp(L_p; \Z_p) = 2$ is given by the non-zero product $x_1\smile y_2$. The mod $p$ Bockstein homomorphism $$\beta \colon H^{*}(-;\Z_p) \to H^{*+1}(-;\Z_p)$$ is stable with excess $e(\beta) = 1$ and $\beta(x_1) = y_2$ \cite[Example 3E.2]{HATCHER}. From Theorem \ref{thm:superweight_hd} (with $\theta = \beta$ and $u = x_1$), we deduce the image $y_2 = \beta(x_1)$ satisfies $\Hw(y_2;\Z_p) \ge 2$. By applying Corollary \ref{cor:bound_hd}, we obtain:
$$\Hcat(L_p;\Z_p) \ge \Hw(x_1,\Z_p) + \Hw(y_2,\Z_p)\ge 1 + 2 = 3.$$
Since $\cat(L_p)\leq\dim(L_p)=3$, we deduce 
$$\Hcat(L_p;\Z_p)=3>2=\lcp(L_p; \Z_p).$$
We confirm this computationally in Example \ref{lens_space_hcat}, by constructing an explicit $\Z_p$-cohomologically trivial cover for a triangulation of $L_p$ in the case $p=3$.

On the other hand, for the topological complexity $\TC(L_p)$, the lower bound $\lcp(\ker\smile; \Z_p) = 3$ is given by the non-zero product $z_x\cdot z_y\cdot z_y$, where $$z_x = x_1 \otimes 1 - 1 \otimes x_1, \quad z_y = y_2 \otimes 1 - 1 \otimes y_2.$$
As the Bockstein homomorphism is a derivation, 
$$\beta(z_x) = \beta(x_1) \otimes 1 - 1 \otimes \beta(x_1) = y_2 \otimes 1 - 1 \otimes y_2 = z_y.$$
By Theorem \ref{thm:superweight_hd} (with $\theta = \beta$ and $u = z_x$), we deduce the image $z_y = \beta(z_x)$ satisfies $\Hw(z_y;\Z_p) \ge 2$. From Corollary \ref{cor:bound_hd}, we obtain:
$$ \HTC(L_p; \Z_p) \ge \Hw(z_x; \Z_p) + \Hw(z_y; \Z_p) + \Hw(z_y; \Z_p) \ge 1 + 2 + 2 = 5. $$
Since $\TC(L_p)=5$ \cite[Corollary 15]{FarberGrant2008} (notice we use the normalized definition of topological complexity), we conclude
$$\HTC(L_p;\Z_p)=5>3=\lcp(\ker\smile,\Z_p).$$
   
\section{Simplicial cohomological distance}\label{sec:simplicial_cohomo_distance}
We now move to the simplicial setting and introduce a simplicial cohomological distance, which plays the discrete counterpart of the cohomological distance for simplicial maps. Let $K$ and $L$ be finite simplicial complexes and $\varphi,\psi\colon K\to L$ simplicial maps.

\begin{definition}
The {\em simplicial cohomological distance} between $\varphi$ and $\psi$ with coefficients in $R$, denoted by $\HsD(\varphi,\psi;R)$, is the least integer $n\geq 0$ for which there exists a cover $\{K_0,\dots,K_n\}$ of $K$ by $n+1$ subcomplexes such that 
$(\varphi_{\vert K_j})^* = (\psi_{\vert K_j})^*\colon H^m(L;R)\to H^m(K_j;R)$
for every $m\geq 0$ and for each $j=0,\dots,n$.
\end{definition}

We denote $\Hscat(K;R)=\HsD(\cte,\id_K;R)$ and $\HsTC(K;R)=\HsD(\pi_1,\pi_2;R)$ the {\em simplicial cohomological LS category} and the {\em simplicial cohomological topological complexity}, respectively.

The simplicial cohomological distance satisfies properties analogous to those established for the cohomological distance and for the simplicial distance. Moreover, observe that it satisfies: 
$$\HsD(\sd\varphi,\sd\psi;R)\leq\HsD(\varphi,\psi;R).$$
This suggests that subdividing may refine the simplicial distance. Let us show one example where the inequality is strict.

\begin{example}\label{ejemplo_scat_dim}
    Let $K$ be the complete graph $K_5$ considered as a $1$-dimensional simplicial complex. First, observe that $\Hscat(K;R)>1$. Indeed, if we consider a cover of $K$ by two subcomplexes, by the Pigeonhole Principle, one of them has three or more vertices and therefore a cocycle in cohomology. Denote the vertices of $K$ as \{1,2,3,4,5\}. Then the subcomplexes generated by the maximal simplices
    \begin{align*}
        &K_0=[1,2]\cup[1,3]\cup[1,4]\cup[1,5],\\&K_1=[2,3]\cup[2,4]\cup[3,5] \text{ and }\\ &K_2=[2,5]\cup[3,4]\cup[4,5]
    \end{align*} form a cover of $K$ and they guarantee that $\Hscat(K;R)=2$. An analogous argument shows that $\scat(K)=2$ and $\scat(\sd K)=1$ (see \cite[Example 5.6]{QUIQUE2}).
    
    As $\lcp(K;R)=1$, this shows that the inequality $\lcp(K;R)<\Hscat(K;R)$ may be sharp. Furthermore, we have
    $$\Hscat(\sd K;R)=\scat(\sd K)=1<\Hscat(K;R)=\scat(K)=2.$$
\end{example}

\subsection{Convergence}
We now turn our attention to the comparison between the discrete invariants defined on a simplicial complex $K$ and the corresponding continuous invariants of its geometric realization $\abs{K}$. The complex of Figure \ref{figura_triangulo}  shows that the inequality $\D(\abs{\varphi},\abs{\psi})\leq\sD(\sd^n\varphi,\sd^n\psi)$ might be strict for every $n\geq 1$ (see \cite[Example 4.6]{QUIQUE2}).

\begin{figure}[h!]
    \centering
\includegraphics[width=0.3\linewidth]{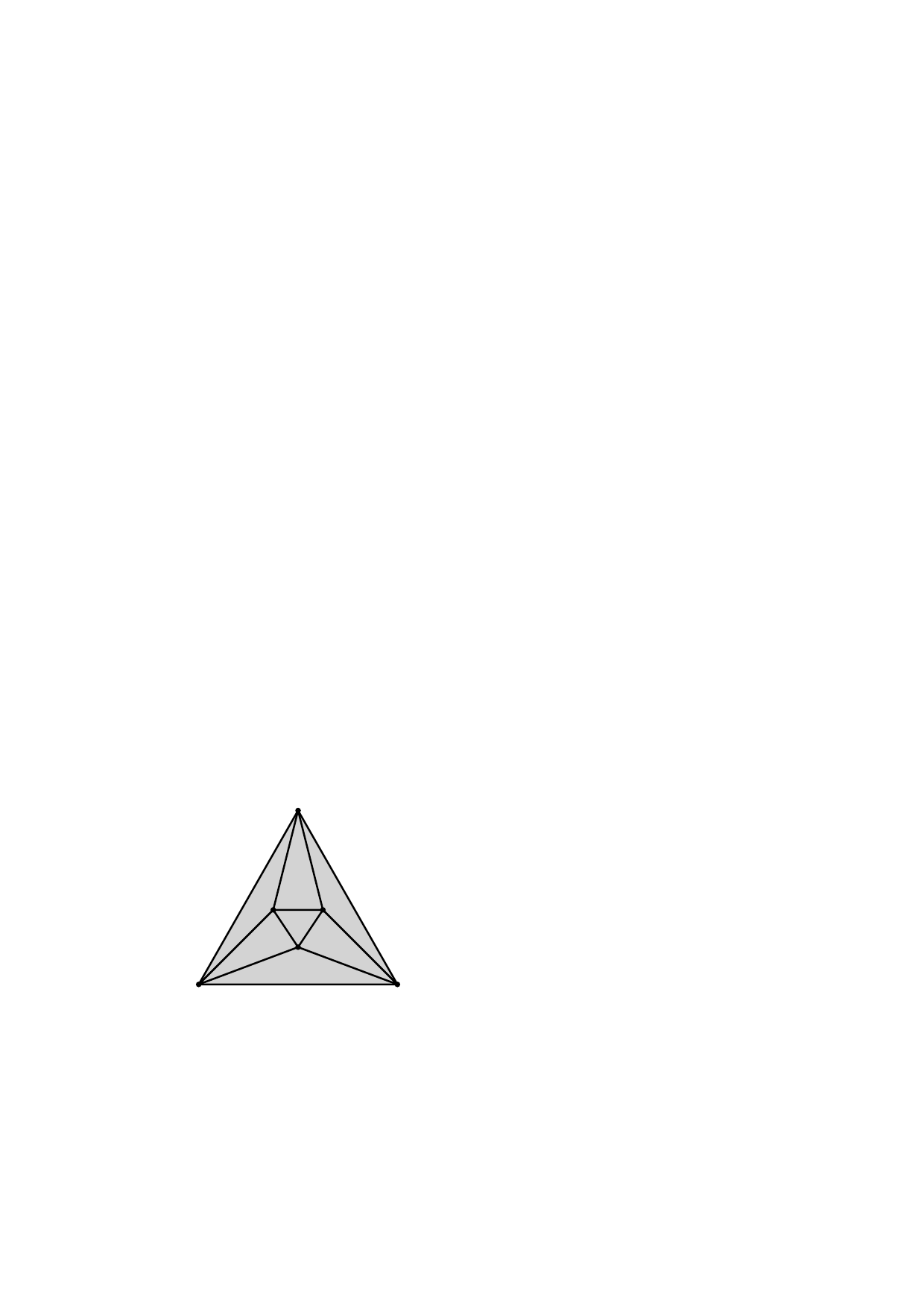}
    \caption{A complex $K$ whose geometric realization is contractible but $\scat(\sd^nK)>0$ for every $n\geq0$}
    \label{figura_triangulo}
\end{figure}

On the contrary, unlike the simplicial distance, we show that the simplicial cohomological distance converges to the cohomological distance of the geometric realization after enough subdivisions.
\begin{theorem}[Approximation theorem]\label{thm:simplicial_approx_cohomology}
    Let $K$ and $L$ be finite simplicial complexes, and let $f,g\colon \abs{K}\to \abs{L}$ be continuous maps such that $\HD(f,g;R)=n$. Then there exists an integer $q\geq 0$, a cover $\{K_0'',\dots,K_n''\}$ of $K''=\sd^q(K)$ by $n+1$ subcomplexes and simplicial maps $\varphi,\psi\colon K''\to L$ whose geometric realizations $\abs{\varphi}$ and $\abs{\psi}$ are homotopic to $f$ and $g$ respectively, satisfying  $(\varphi_{\vert K_j''})^*=(\psi_{\vert K_j''})^*$ for every $j=0,\ldots,n$. In particular, $$\HD(f,g;R)=\HsD(\varphi,\psi;R).$$
\end{theorem}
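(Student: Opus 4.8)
The plan is to transport the open cover realising $\HD(f,g;R)=n$ to a subcomplex cover of a sufficiently fine barycentric subdivision, combining the Lebesgue number lemma, the fact that $\mathrm{mesh}(\sd^q K)\to 0$ as $q\to\infty$, and the simplicial approximation theorem. I would start from an open cover $\{U_0,\dots,U_n\}$ of $\abs{K}$ with $(f_{\vert U_j})^*=(g_{\vert U_j})^*$ for all $j$, supplied by $\HD(f,g;R)=n$. Since $\abs{K}$ is a compact metric space, I would let $\delta>0$ be a common Lebesgue number for the three open covers $\{U_0,\dots,U_n\}$, $\{f^{-1}(\St(w))\}_{w}$ and $\{g^{-1}(\St(w))\}_{w}$, where $w$ runs over the vertices of $L$, and then choose $q$ with $\mathrm{mesh}(\sd^q K)<\delta/2$, setting $K''=\sd^q K$. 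With this choice every closed star $\overline{\St}(v)$ of a vertex of $K''$ has diameter $<\delta$, hence lies in some $U_j$, and the open stars of $K''$ refine both $\{f^{-1}(\St(w))\}$ and $\{g^{-1}(\St(w))\}$; so the simplicial approximation theorem furnishes simplicial maps $\varphi,\psi\colon K''\to L$ approximating $f$ and $g$ (only the source is subdivided).

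The second ingredient I would use is that a simplicial approximation is joined to the map it approximates by the straight-line homotopy lying inside the carrier simplices of $\abs{L}$. This gives $\abs{\varphi}\simeq f$ and $\abs{\psi}\simeq g$; more importantly, the homotopy restricts over any subcomplex $A\subseteq K''$, so $\varphi_{\vert A}$ is a simplicial approximation of $f_{\vert\abs{A}}$ and hence $\abs{\varphi}_{\vert\abs{A}}\simeq f_{\vert\abs{A}}$ (and likewise for $\psi$ and $g$). I would then build the cover by choosing, for each vertex $v$ of $K''$, an index $j(v)\in\{0,\dots,n\}$ with $\overline{\St}(v)\subseteq U_{j(v)}$, and setting $K_j''=\bigcup_{\{v:\,j(v)=j\}}\overline{\St}(v)$. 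Each $K_j''$ is a subcomplex of $K''$, one has $\abs{K_j''}\subseteq U_j$, and $\{K_0'',\dots,K_n''\}$ covers $K''$ because every simplex belongs to the closed star of each of its vertices.

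To finish the forward direction I would pass to singular cohomology, which agrees naturally with simplicial cohomology on geometric realisations. The inclusion $\abs{K_j''}\hookrightarrow U_j$ shows $(f_{\vert\abs{K_j''}})^*=(g_{\vert\abs{K_j''}})^*$ because both factor through $(f_{\vert U_j})^*=(g_{\vert U_j})^*$; combined with $\abs{\varphi}_{\vert\abs{K_j''}}\simeq f_{\vert\abs{K_j''}}$ and $\abs{\psi}_{\vert\abs{K_j''}}\simeq g_{\vert\abs{K_j''}}$ this yields $(\varphi_{\vert K_j''})^*=(\psi_{\vert K_j''})^*$, so $\HsD(\varphi,\psi;R)\leq n$. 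For the reverse inequality, $f\simeq\abs{\varphi}$ and $g\simeq\abs{\psi}$ give $\HD(\abs{\varphi},\abs{\psi};R)=\HD(f,g;R)=n$, while thickening the subcomplexes of any simplicial cover of $K''$ to open regular neighbourhoods that deformation retract onto them shows $\HD(\abs{\varphi},\abs{\psi};R)\leq\HsD(\varphi,\psi;R)$; hence $\HsD(\varphi,\psi;R)=n=\HD(f,g;R)$.

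The step I expect to be the main obstacle is the middle one: I must make sure the simplicial approximation homotopy is sufficiently \emph{local} to restrict to each $\abs{K_j''}$ — this is precisely why I would phrase simplicial approximation in terms of carriers — and that a single exponent $q$ can be chosen so that the closed stars simultaneously have small diameter, form a subcomplex cover, and can be assigned consistently to the $U_j$. I would also keep in mind that empty pieces $K_j''$, if any arise, are harmless for the cover, and that the identification of simplicial with singular cohomology must be used naturally with respect to restrictions to subcomplexes.
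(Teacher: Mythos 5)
Your proof is correct and follows the same overall strategy as the paper: extract a Lebesgue number from the open cover witnessing $\HD(f,g;R)=n$, subdivide finely, build a subcomplex cover sitting inside the $U_j$, apply simplicial approximation, and finish via the naturality of the simplicial--singular comparison isomorphism. There are, however, a few noteworthy variations. First, the paper subdivides in two stages ($K'=\sd^m K$ to get the subcomplex cover, then $K''=\sd^p K'$ for simplicial approximation, with $q=m+p$), whereas you take a single common Lebesgue number for $\{U_j\}$, $\{f^{-1}(\St(w))\}$ and $\{g^{-1}(\St(w))\}$ and subdivide once; both work, yours is a little more economical. Second, the paper's subcomplexes are $K_j'=\bigcup\{\sigma : \abs{\sigma}\subseteq U_j\}$, while you use closed vertex stars assigned by a choice function $j(v)$; again both are valid covers contained in the respective $U_j$. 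Third — and this is a genuine improvement in explicitness — you spell out why the straight-line (carrier) homotopy restricts to each $\abs{K_j''}$, giving $\abs{\varphi}_{\vert\abs{K_j''}}\simeq f_{\vert\abs{K_j''}}$ and hence $(\abs{\varphi}_{\vert\abs{K_j''}})^*=(f_{\vert\abs{K_j''}})^*$; the paper asserts $(\abs{\varphi}_{\vert\abs{K_j''}})^*=(\abs{\psi}_{\vert\abs{K_j''}})^*$ without naming this step. Finally, you explicitly prove the reverse inequality $\HD(f,g;R)\leq\HsD(\varphi,\psi;R)$ via regular neighbourhoods of subcomplexes, which the paper's proof leaves implicit in its concluding "in particular." Your write-up is, if anything, more careful than the original on these last two points.
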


\begin{proof}
    Suppose $\HD(f,g;R)=n$, and let $\mathcal{U}=\{U_0,\dots,U_n\}$ be a cover of $\abs{K}$ such that $(f_{\vert U_j})^* = (g_{\vert U_j})^*$ for every $j=0,\dots,n$. 
    
    Since $K$ is finite, its geometric realization $\abs{K}$ is compact. Hence, the cover $\mathcal{U}$ admits a Lebesgue number $\delta>0$, i.e., every subset of $\abs{K}$ with diameter less than $\delta$ is contained in some element of $\mathcal{U}$. 
    
    By geometric properties of barycentric subdivision, there exists an integer $m\geq 1$ such that every simplex of the $m$-th barycentric subdivision $K'=\sd^m K$ has diameter less than $\delta$. Consequently, every simplex of $K'$ lies in some $U_j$, so the following subcomplexes of $K'$:
    $$
    K_j' := \bigcup_{\substack{\sigma\in K' \\ |\sigma|\subseteq U_j}} \sigma, 
        \qquad j=0,\dots,n,
    $$
    form a cover of $K'$ by $n+1$ subcomplexes. Moreover, $\abs{K_j'}\subseteq U_j$ for each $j=0,\dots,n$. Hence,
  $(f_{\vert\abs{K_j'}})^*=(g_{\vert\abs{K_j'}})^*$.
    
    By the Simplicial Approximation Theorem, there exists a subdivision $K''=\sd^p K'$ such that $f$ and $g$ admit simplicial approximations $\varphi,\psi\colon K''\to L$. Let $K_j''=\sd^p K_j'$ and consider the cover $\{K_0'',\dots,K_n''\}$ of $K''$ by $n+1$ subcomplexes. Then $(\abs{\varphi}_{\vert\abs{K_j''}})^*=(\abs{\psi}_{\vert\abs{K_j''}})^*.$    
    
    Moreover, by the naturality of the isomorphism between simplicial and singular cohomology, the following diagram commutes:
    $$
    \begin{tikzcd}[row sep=huge,column sep=huge]
    H^*(L;R) \arrow[r, "\cong"] \arrow[d, "(\varphi_{K_j''})^*"', shift right=1.5] 
                          \arrow[d, "(\psi_{K_j''})^*", shift left=1.5] 
    & H^*(\abs{L};R) \arrow[d, "(\abs{\varphi}_{\vert \abs{K_j''}})^*"', shift right=1.5] 
                         \arrow[d, "(\abs{\psi}_{\vert \abs{K_j''}})^*", shift left=1.5] \\
    H^*(K_j'';R) \arrow[r, "\cong"] & H^*(\abs{K_j''};R)
    \end{tikzcd}
    $$
    Consequently, $(\varphi_{\vert K_j''})^* = (\psi_{\vert K_j''})^*$, for all $j=0,\dots,n$ and $q=m+p$.
\end{proof}

\subsection{Computations}\label{subsection_computations}

Using a symbolic computation program implemented in \textsf{SageMath}, we compute several simplicial cohomological distances for specific triangulations which can be found in \cite{LIBRARY}.

\begin{example}
   Let $K$ be the minimal triangulation of the complex projective plane $\CP^2$. The cover $\{K_0,K_1,K_2\}$ of $K$, described in Table \ref{tabla_CP2}, satisfies that for each $j=0,1,2$, the inclusion $\iota\colon K_j\hookrightarrow K$ induces in $\mathbb{Z}_2$-cohomology the same map as a constant simplicial map. Therefore, $$\Hscat(K;\Z_2)=\cat(\CP^2)=2.$$
   \begin{table}[h!]
        \centering
        \caption{Maximal faces of the subcomplexes of the cover of the triangulation of $\CP^2$}
        \label{tabla_CP2}
        \scalebox{0.75}{%
        \begin{tabular}{c|c|c}
            $K_0$ & $K_1$ & $K_2$ \\ \hline
            $[1, 2, 5, 6, 8]$ & $[2, 4, 5, 8, 9]$ & $[2, 3, 4, 6, 9]$ \\
            $[2, 4, 6, 7, 9]$ & $[1, 2, 4, 6, 7]$ & $[1, 2, 3, 7, 9]$ \\
            $[1, 3, 4, 5, 7]$ & $[4, 5, 7, 8, 9]$ & $[2, 3, 6, 7, 9]$ \\
            $[1, 3, 4, 7, 8]$ & $[1, 5, 6, 8, 9]$ & $[1, 2, 4, 5, 9]$ \\
            $[1, 3, 5, 7, 9]$ & $[1, 3, 5, 6, 9]$ & $[2, 3, 4, 8, 9]$ \\
            $[1, 2, 4, 5, 6]$ & $[1, 4, 6, 7, 8]$ & $[3, 4, 5, 7, 8]$ \\
            $[1, 2, 6, 7, 8]$ & $[1, 3, 4, 6, 8]$ & $[2, 5, 6, 7, 8]$ \\
            $[1, 2, 3, 7, 8]$ & $[3, 5, 6, 7, 9]$ & $[1, 2, 3, 8, 9]$ \\
            $[1, 2, 5, 8, 9]$ & $[3, 4, 6, 8, 9]$ & $[2, 3, 5, 7, 8]$ \\
            $[1, 4, 5, 7, 9]$ & $[4, 6, 7, 8, 9]$ & $[2, 3, 4, 5, 6]$ \\
            $[1, 3, 4, 5, 6]$ & $[5, 6, 7, 8, 9]$ & $[2, 3, 5, 6, 7]$ \\
            $[1, 2, 4, 7, 9]$ & $[1, 3, 6, 8, 9]$ & $[2, 3, 4, 5, 8]$
        \end{tabular}}
    \end{table}
\end{example}

\begin{example}
   Let $K$ be the minimal triangulation of the real projective space $\RP^3$. The cover $\{K_0,K_1,K_2,K_3\}$ of $K$, described in Table \ref{tabla_RP3}, satisfies that for each $j=0,1,2,3$, the inclusion $\iota\colon K_j\hookrightarrow K$ induces in $\mathbb{Z}_2$-cohomology the same map as a constant simplicial map. Therefore, $$\Hscat(K;\Z_2)=\cat(\RP^3)=3.$$
   \begin{table}[h!]
    \centering
    \caption{Maximal faces of the subcomplexes of the cover of the triangulation of $\RP^3$}
    \label{tabla_RP3}
    \scalebox{0.75}{%
    \begin{tabular}{c|c|c|c}
        $K_0$ & $K_1$ & $K_2$ & $K_3$ \\ \hline
        $[3, 5, 9, 10]$ & $[1, 4, 8, 10]$ & $[2, 5, 7, 9]$ & $[4, 5, 7, 9]$ \\
        $[3, 6, 7, 8]$ & $[3, 6, 8, 9]$ & $[1, 6, 8, 9]$ & $[1, 3, 7, 10]$ \\
        $[1, 3, 5, 11]$ & $[1, 5, 6, 11]$ & $[2, 4, 6, 11]$ & $[3, 6, 7, 10]$ \\
        $[2, 3, 7, 8]$ & $[3, 4, 8, 9]$ & $[2, 3, 4, 11]$ & $[4, 5, 6, 11]$ \\
        $[3, 4, 5, 11]$ & $[5, 6, 7, 8]$ & $[1, 2, 3, 11]$ & $[1, 4, 7, 10]$ \\
        $[1, 2, 3, 7]$ & $[1, 5, 6, 8]$ & $[1, 2, 6, 11]$ & $[4, 6, 7, 10]$ \\
        $[1, 3, 5, 10]$ & $[2, 5, 7, 8]$ & $[2, 6, 9, 10]$ & $[1, 4, 8, 9]$ \\
        $[3, 4, 5, 9]$ & $[1, 5, 8, 10]$ & $[1, 2, 6, 9]$ & $[1, 2, 7, 9]$ \\
        $[3, 6, 9, 10]$ & $[2, 5, 8, 10]$ & $[2, 4, 6, 10]$ & $[1, 4, 7, 9]$ \\
        $[2, 3, 4, 8]$ & $[2, 4, 8, 10]$ & $[2, 5, 9, 10]$ & $[4, 5, 6, 7]$
    \end{tabular}}
\end{table}
\end{example}

\begin{example}\label{lens_space_hcat}
    Let $K$ be Brehm's vertex-minimal triangulation of the lens space $L_3:=L(3;1)$. The cover $\{K_0,K_1,K_2,K_3\}$ of $K$, described in Table \ref{tabla_L33}, satisfies that for each $j=0,1,2,3$, the inclusion $\iota\colon K_j\hookrightarrow K$ induces in $\mathbb{Z}_3$-cohomology the same map as a constant simplicial map. Therefore, $$\Hscat(K;\Z_3)=\cat(L_3)=3.$$
    \begin{table}[h!]
    \centering
    \caption{Maximal faces of the subcomplexes of the cover of the triangulation of $L_3$}
    \label{tabla_L33}
    \scalebox{0.75}{%
    \begin{tabular}{c|c|c|c}
        $K_0$ & $K_1$ & $K_2$ & $K_3$ \\ \hline
        $[3, 4, 5, 6]$ & $[1, 2, 3, 10]$ & $[2, 6, 7, 11]$ & $[1, 7, 8, 12]$ \\
        $[4, 5, 6, 10]$ & $[3, 4, 6, 7]$ & $[2, 5, 6, 8]$ & $[3, 8, 9, 10]$ \\
        $[2, 7, 10, 11]$ & $[2, 3, 10, 12]$ & $[6, 9, 10, 12]$ & $[1, 8, 11, 12]$ \\
        $[5, 7, 9, 11]$ & $[4, 8, 11, 12]$ & $[3, 9, 10, 12]$ & $[1, 3, 7, 8]$ \\
        $[3, 5, 8, 9]$ & $[1, 2, 4, 9]$ & $[1, 2, 5, 6]$ & $[3, 6, 7, 8]$ \\
        $[1, 2, 10, 11]$ & $[4, 6, 7, 9]$ & $[6, 7, 9, 11]$ & $[2, 4, 8, 9]$ \\
        $[3, 4, 5, 11]$ & $[2, 3, 4, 12]$ & $[2, 6, 7, 8]$ & $[4, 8, 9, 10]$ \\
        $[3, 5, 6, 8]$ & $[1, 2, 5, 9]$ & $[1, 6, 11, 12]$ & $[1, 5, 7, 12]$ \\
        $[5, 6, 10, 12]$ & $[3, 4, 11, 12]$ & $[1, 5, 6, 12]$ & $[1, 3, 8, 10]$ \\
        $[4, 5, 10, 11]$ & $[1, 2, 3, 4]$ & $[1, 2, 6, 11]$ & $[2, 7, 8, 12]$ \\
        $[3, 5, 9, 11]$ & $[1, 3, 4, 7]$ & $[4, 6, 9, 10]$ & $[2, 4, 8, 12]$ \\
        $[5, 7, 10, 11]$ & $[2, 5, 8, 9]$ & $[3, 9, 11, 12]$ & $[4, 8, 10, 11]$ \\
        $[2, 7, 10, 12]$ & $[1, 4, 7, 9]$ & $[6, 9, 11, 12]$ & $[1, 8, 10, 11]$ \\
        $[5, 7, 10, 12]$ & $[1, 5, 7, 9]$ & &
    \end{tabular}}
\end{table}
\end{example}

\begin{example}\label{ejemplohscat_s1xs2}
    Let $K'=[0,1]\cup[0,2]\cup[1,2]$ and $K''=[0,1,2]\cup[0,1,3]\cup[0,2,3]\cup[1,2,3]$ be triangulations of the circle $\S^1$ and the sphere $\S^2$, respectively. The cover $\{K_0,K_1,K_2\}$ of the product $K=K'\times K''$, described in Table~\ref{tabla_S1xS2}, satisfies that for each $j=0,1,2$, the inclusion $\iota\colon K_j\hookrightarrow K$ induces in $\mathbb{Z}_2$-cohomology the same map as a constant simplicial map. Hence, $$\Hscat(K;\Z_2)=\cat(\S^1\times\S^2)=2.$$
    \begin{table}[h!]
        \centering
        \caption{Maximal faces of the subcomplexes of the cover of the triangulation of $\S^1\times\S^2$}
        \label{tabla_S1xS2}
        \scalebox{0.75}{%
        \begin{tabular}{c|c|c}
            $K_0$ & $K_1$ & $K_2$ \\ \hline
            $[(2,2), (1,2), (1,1), (2,3)]$ & $[(1,3), (0,3), (0,0), (0,1)]$ & $[(0,2), (0,3), (2,3), (0,1)]$ \\
            $[(2,2), (1,0), (2,0), (2,3)]$ & $[(1,0), (1,3), (1,1), (0,0)]$ & $[(1,3), (0,2), (0,3), (0,1)]$ \\
            $[(2,2), (0,2), (0,0), (2,3)]$ & $[(2,2), (1,1), (2,3), (2,1)]$ & $[(0,0), (2,3), (2,1), (0,1)]$ \\
            $[(2,2), (1,0), (1,2), (2,3)]$ & $[(1,0), (2,0), (2,3), (2,1)]$ & $[(2,2), (0,2), (2,3), (0,1)]$ \\
            $[(2,2), (2,0), (0,0), (2,1)]$ & $[(1,2), (1,3), (1,1), (0,1)]$ & $[(1,0), (1,2), (1,1), (0,0)]$ \\
            $[(2,2), (1,0), (2,0), (2,1)]$ & $[(1,3), (0,2), (0,3), (0,0)]$ & $[(2,2), (2,3), (2,1), (0,1)]$ \\
            $[(0,2), (0,3), (0,0), (2,3)]$ & $[(1,0), (1,3), (1,1), (2,3)]$ & $[(2,2), (0,0), (2,1), (0,1)]$ \\
            $[(2,2), (2,0), (0,0), (2,3)]$ & $[(1,0), (1,1), (2,3), (2,1)]$ & $[(1,2), (1,1), (0,0), (0,1)]$ \\
            $[(2,0), (0,0), (2,3), (2,1)]$ & $[(1,3), (1,1), (0,0), (0,1)]$ & $[(1,2), (1,3), (0,2), (0,0)]$ \\
            $[(2,2), (0,2), (0,0), (0,1)]$ & $[(1,0), (1,2), (1,3), (2,3)]$ & $[(1,2), (0,2), (0,0), (0,1)]$ \\
            $[(2,2), (1,0), (1,2), (1,1)]$ & $[(1,2), (1,3), (0,2), (0,1)]$ & $[(1,0), (1,2), (1,3), (0,0)]$ \\
            $[(1,2), (1,3), (1,1), (2,3)]$ & $[(2,2), (1,0), (1,1), (2,1)]$ & $[(0,3), (0,0), (2,3), (0,1)]$
        \end{tabular}}
    \end{table}
\end{example}

\begin{example}\label{ejemplohstc_s2}
  Let $K=[0,1,2]\cup[0,1,3]\cup[0,2,3]\cup[1,2,3]$ be a triangulation of the sphere $\S^2$, and consider the product $K\times K$. The cover $\{K_0,K_1,K_2\}$ of $K\times K$, described in Table~\ref{tabla}, satisfies that for each $j=0,1,2$, the projections $\pi_1,\pi_2\colon K_j\to K$ induce in $\mathbb{Z}_2$-cohomology the same map. In this case, $$\HsTC(K;\Z_2)=\TC(\S^2)=2.$$
  \begin{table}[h!]
        \centering
        \caption{Maximal faces of the subcomplexes of the cover of the triangulation of $\S^2\times\S^2$}
        \label{tabla}
        \resizebox{\textwidth}{!}{
        \begin{tabular}{ccc}
        \multicolumn{1}{c|}{$K_0$} & \multicolumn{1}{c|}{$K_1$} & $K_2$ \\ \hline
        \multicolumn{1}{c|}{
        \begin{tabular}[c]{@{}c@{}}
        $[(2,3), (0,1), (0,3), (1,3), (0,0)]$\\
        $[(0,1), (3,3), (3,1), (0,0), (1,1)]$\\
        $[(2,3), (0,2), (3,3), (0,3), (0,0)]$\\
        $[(2,3), (0,2), (0,1), (0,3), (1,3)]$\\
        $[(2,3), (0,1), (1,3), (0,0), (1,1)]$\\
        $[(2,3), (2,1), (0,0), (1,1), (1,0)]$\\
        $[(2,3), (1,2), (0,2), (1,3), (0,0)]$\\
        $[(0,1), (3,3), (3,1), (3,2), (1,1)]$\\
        $[(2,3), (0,2), (0,3), (1,3), (0,0)]$\\
        $[(0,1), (3,3), (3,1), (2,1), (0,0)]$\\
        $[(0,2), (3,3), (2,2), (3,2), (0,0)]$\\
        $[(2,3), (0,2), (0,1), (3,3), (2,2)]$\\
        $[(0,2), (3,3), (0,3), (1,3), (0,0)]$\\
        $[(2,3), (1,2), (0,1), (2,2), (1,1)]$\\
        $[(2,3), (0,1), (3,3), (0,3), (0,0)]$\\
        $[(2,3), (1,2), (0,2), (0,1), (1,3)]$\\
        $[(0,1), (3,3), (1,3), (0,0), (1,1)]$\\
        $[(1,2), (0,2), (0,1), (3,3), (3,2)]$\\
        $[(2,3), (1,2), (0,2), (0,1), (2,2)]$\\
        $[(2,3), (0,2), (0,1), (3,3), (0,3)]$\\
        $[(2,3), (0,2), (3,3), (2,2), (0,0)]$\\
        $[(2,3), (1,2), (0,1), (1,3), (1,1)]$\\
        $[(0,2), (0,1), (3,3), (2,2), (3,2)]$\\
        $[(0,1), (3,3), (2,2), (3,2), (2,1)]$\\
        $[(3,3), (1,3), (0,0), (1,1), (1,0)]$\\
        $[(2,3), (0,1), (2,1), (0,0), (1,1)]$\\
        $[(0,1), (3,3), (3,1), (3,2), (2,1)]$\\
        $[(1,2), (0,1), (3,3), (1,3), (1,1)]$\\
        $[(2,3), (1,2), (0,2), (2,2), (0,0)]$\\
        $[(3,3), (3,1), (0,0), (1,1), (1,0)]$\\
        $[(2,3), (0,1), (2,2), (2,1), (1,1)]$\\
        $[(0,1), (3,3), (0,3), (1,3), (0,0)]$
        \end{tabular}
        } & \multicolumn{1}{c|}{
        \begin{tabular}[c]{@{}c@{}}
        $[(3,3), (2,2), (2,0), (3,2), (0,0)]$\\
        $[(2,2), (2,1), (0,0), (1,1), (1,0)]$\\
        $[(2,2), (2,0), (2,1), (0,0), (1,0)]$\\
        $[(2,3), (3,3), (1,3), (1,1), (1,0)]$\\
        $[(2,3), (3,3), (2,2), (2,0), (0,0)]$\\
        $[(2,3), (2,0), (2,1), (0,0), (1,0)]$\\
        $[(2,3), (1,2), (3,3), (2,2), (1,0)]$\\
        $[(2,3), (3,3), (2,1), (1,1), (1,0)]$\\
        $[(3,3), (2,0), (3,1), (2,1), (0,0)]$\\
        $[(1,2), (0,1), (2,2), (0,0), (1,1)]$\\
        $[(2,3), (0,1), (3,3), (2,1), (0,0)]$\\
        $[(2,3), (1,2), (2,2), (0,0), (1,0)]$\\
        $[(1,2), (0,2), (0,1), (2,2), (0,0)]$\\
        $[(2,3), (1,2), (1,3), (0,0), (1,0)]$\\
        $[(1,2), (2,2), (0,0), (1,1), (1,0)]$\\
        $[(0,2), (0,1), (2,2), (3,2), (0,0)]$\\
        $[(3,3), (3,0), (2,0), (3,1), (1,0)]$\\
        $[(3,3), (3,1), (3,2), (2,1), (1,1)]$\\
        $[(3,3), (3,0), (2,0), (3,2), (1,0)]$\\
        $[(3,3), (2,2), (2,0), (3,2), (1,0)]$\\
        $[(2,3), (3,3), (2,0), (2,1), (1,0)]$\\
        $[(2,3), (1,3), (0,0), (1,1), (1,0)]$\\
        $[(3,3), (3,0), (2,0), (3,1), (0,0)]$\\
        $[(2,3), (1,2), (3,3), (1,3), (1,0)]$\\
        $[(1,2), (3,3), (2,2), (3,2), (1,0)]$\\
        $[(2,3), (2,2), (2,0), (0,0), (1,0)]$\\
        $[(2,3), (3,3), (2,0), (2,1), (0,0)]$\\
        $[(2,3), (3,3), (2,2), (2,0), (1,0)]$\\
        $[(0,1), (2,2), (2,1), (0,0), (1,1)]$\\
        $[(3,3), (3,1), (2,1), (1,1), (1,0)]$\\
        $[(3,3), (2,0), (3,1), (2,1), (1,0)]$\\
        $[(3,3), (3,0), (2,0), (3,2), (0,0)]$
        \end{tabular}
        } & 
        \begin{tabular}[c]{@{}c@{}}
        $[(3,3), (2,2), (2,0), (3,2), (0,0)]$\\
        $[(2,2), (2,1), (0,0), (1,1), (1,0)]$\\
        $[(2,2), (2,0), (2,1), (0,0), (1,0)]$\\
        $[(2,3), (3,3), (1,3), (1,1), (1,0)]$\\
        $[(2,3), (3,3), (2,2), (2,0), (0,0)]$\\
        $[(2,3), (2,0), (2,1), (0,0), (1,0)]$\\
        $[(2,3), (1,2), (3,3), (2,2), (1,0)]$\\
        $[(2,3), (3,3), (2,1), (1,1), (1,0)]$\\
        $[(3,3), (2,0), (3,1), (2,1), (0,0)]$\\
        $[(1,2), (0,1), (2,2), (0,0), (1,1)]$\\
        $[(2,3), (0,1), (3,3), (2,1), (0,0)]$\\
        $[(2,3), (1,2), (2,2), (0,0), (1,0)]$\\
        $[(1,2), (0,2), (0,1), (2,2), (0,0)]$\\
        $[(2,3), (1,2), (1,3), (0,0), (1,0)]$\\
        $[(1,2), (2,2), (0,0), (1,1), (1,0)]$\\
        $[(0,2), (0,1), (2,2), (3,2), (0,0)]$\\
        $[(3,3), (3,0), (2,0), (3,1), (1,0)]$\\
        $[(3,3), (3,1), (3,2), (2,1), (1,1)]$\\
        $[(3,3), (3,0), (2,0), (3,2), (1,0)]$\\
        $[(3,3), (2,2), (2,0), (3,2), (1,0)]$\\
        $[(2,3), (3,3), (2,0), (2,1), (1,0)]$\\
        $[(2,3), (1,3), (0,0), (1,1), (1,0)]$\\
        $[(3,3), (3,0), (2,0), (3,1), (0,0)]$\\
        $[(2,3), (1,2), (3,3), (1,3), (1,0)]$\\
        $[(1,2), (3,3), (2,2), (3,2), (1,0)]$\\
        $[(2,3), (2,2), (2,0), (0,0), (1,0)]$\\
        $[(2,3), (3,3), (2,0), (2,1), (0,0)]$\\
        $[(2,3), (3,3), (2,2), (2,0), (1,0)]$\\
        $[(0,1), (2,2), (2,1), (0,0), (1,1)]$\\
        $[(3,3), (3,1), (2,1), (1,1), (1,0)]$\\
        $[(3,3), (2,0), (3,1), (2,1), (1,0)]$\\
        $[(3,3), (3,0), (2,0), (3,2), (0,0)]$
        \end{tabular}
        \end{tabular}
        }
    \end{table}
\end{example}

\clearpage 

%\bibstyle{sn-mathphys}
\bibliographystyle{plain}
\bibliography{biblio}% common bib file HACER ESTE FICHERO

%% if required, the content of .bbl file can be included here once bbl is generated
%\input sn-article.bbl

%% Default %%
%%\input sn-sample-bib.tex%

\end{document}